\theoremstyle{plain}
\newtheorem{thm}{Theorem}
\newtheorem{nota}[thm]{Notation}
\newtheorem{rem}[thm]{Remark}
\newcommand{\N}{\mathbb{N}}
\def\multiset#1#2{\ensuremath{\left(\kern-.2em\left(\genfrac{}{}{0pt}{}{#1}{#2}\right)\kern-.2em\right)}}
\begin{document}

\title{On completions of symmetric and antisymmetric 
block diagonal partial matrices}
\author{Elena Rubei}
\date{}
\maketitle

\def\thefootnote{}
\footnotetext{ \hspace*{-0.36cm}
{\bf 2010 Mathematical Subject Classification:} 15A83

{\bf Key words:} partial matrices, completion, diagonal blocks,  
symmetric and antisymmetric matrices}

\begin{abstract}
A partial matrix is a 
matrix where  only  some of the entries are given. 
We determine the maximum rank of the symmetric completions of a
symmetric partial matrix where only the diagonal blocks are given and
the minimum rank and the maximum rank of the antisymmetric completions of an
antisymmetric partial matrix where only the diagonal blocks are given. 
\end{abstract}

\section{Introduction} Let $K$ be a field. A partial matrix over $K$ is a 
matrix where only some of the entries are given and they are elements of $K$.
A completion of a partial matrix is a specification of the unspecified 
entries. We say that a submatrix of a partial matrix is specified if all its 
entries are given.
The problem of determining whether, given a partial 
matrix, a completion  with some prescribed property exists and 
related problems have been widely studied: we quote, for instance, 
the papers \cite{BW},
\cite{BHZ}, \cite{CJRW}, \cite{Gee}, \cite{Fie-Mar}, \cite{MQ}, 
\cite{Woe},

In \cite{CJRW}, Cohen,  Johnson,  Rodman and Woerdeman determined 
the maximum rank of the completions of a partial
matrix in terms of the ranks and the sizes of its maximal  
specified submatrices.
From the results in \cite{CJRW}, we get easily the minimum rank and the 
maximum rank of the completions of a partial matrix where only the diagonal 
blocks are given:

\begin{thm} \label{matqualsiasi} (See \cite{CJRW}.)
Let $n_1,...., n_k$ be  nonzero 
natural numbers with $ n_1 \leq n_2 \leq .... \leq n_k$.  
Let $A_i \in M(n_i \times n_i, K) $ for $i=1,..., k$ and $r_i = rank (A_i)$.
Let $A $ be the partial matrix where only the 
diagonal blocks are given and whose diagonal blocks are $A_1,..., A_k$.
Then we have:

(i) the minimum of $\{rk(\tilde{A}) | \; \tilde{A} \; \mbox{\it completion of } A \}$ is $$ max\{r_i| \; i=1,...,k\}$$ 

(ii) the maximum of $\{rk(\tilde{A}) | \; \tilde{A} \; \mbox{\it completion of } A \}$ is $$ min \left\{ \sum_{i=1,...,k} n_i , \; 2 \left(
 \sum_{i=1,...,k-1} n_i \right) +r_k
\right\}.$$ 
\end{thm}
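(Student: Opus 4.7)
My plan is to handle parts (i) and (ii) separately; in each, one direction is an obstruction coming from the specified diagonal blocks, while the other requires an explicit construction.

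For part (i), the bound $\mathrm{rank}(\tilde A)\geq \max_i r_i$ is immediate since each $A_i$ is a submatrix of any completion. For the reverse inequality, set $r=\max_i r_i$ and factor every diagonal block as $A_i=U_iV_i^T$ with $U_i,V_i\in M(n_i\times r,K)$, obtained by padding any rank-$r_i$ factorization with $r-r_i$ zero columns. Stacking the $U_i$ and $V_i$ vertically into matrices $U,V\in M(N\times r,K)$ with $N=\sum_i n_i$, the matrix $\tilde A := UV^T$ has rank at most $r$, and its $(i,j)$-block equals $U_iV_j^T$, which for $i=j$ is $A_i$; hence $\tilde A$ is a completion of rank at most $r$.

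For the upper bound in (ii), I will split any completion $\tilde A$ column-wise into its first $N':=\sum_{i<k}n_i$ columns (spanning a space of dimension at most $N'$) and its last $n_k$ columns, which form a matrix $\begin{pmatrix} X\\ A_k\end{pmatrix}$ with $X\in M(N'\times n_k,K)$ unspecified. Projecting $K^N$ onto its last $n_k$ coordinates, the column space of this block surjects onto $\mathrm{col}(A_k)$, of dimension $r_k$, while the kernel of the projection, restricted to this column space, is $\{Xv:v\in \ker A_k\}$, of dimension at most $\min(N',n_k-r_k)$. Summing the two contributions,
$$\mathrm{rank}(\tilde A)\;\leq\; N'+r_k+\min(N',n_k-r_k)\;=\;\min\Bigl(\sum_i n_i,\;2\sum_{i<k}n_i+r_k\Bigr).$$

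The main obstacle is constructing a completion attaining this maximum. I expect a two-case analysis. If $N'\geq n_k-r_k$, the unspecified degrees of freedom suffice to make $\tilde A$ nonsingular (of rank $N$), which can be arranged by reducing $A_k$ to a normal form with a $(n_k-r_k)\times(n_k-r_k)$ zero block and filling the off-diagonal entries in the top-right and bottom-left to supply the missing pivots. If $N'<n_k-r_k$, one builds rank $2N'+r_k$ by placing independent rank-$N'$ pieces in the top-right and bottom-left off-diagonal positions so that they combine additively with the $r_k$ coming from $A_k$. Alternatively, since the text notes the result follows easily from \cite{CJRW}, I could simply invoke their general maximum-rank formula applied to the pattern whose maximal specified submatrices are exactly $A_1,\dots,A_k$, and verify it reduces to the displayed expression.
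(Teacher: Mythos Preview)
The paper does not give its own proof of this theorem; it is cited from \cite{CJRW}. The only fragment reproduced in the paper is the direct upper-bound argument for (ii), which appears inside the proof of Theorem~\ref{matsim}: there the author splits a completion into its first $\sum_{i<k}n_i$ \emph{rows} and its last $n_k$ rows and bounds each piece. Your column-wise argument is the transpose of this and is correct.

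Your treatment of (i) is complete and correct; the factorization $\tilde A=UV^T$ is a clean way to produce a rank-$r$ completion that the paper does not mention.

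For the constructive half of (ii), your case $N'<n_k-r_k$ works as described: placing full-rank $N'$ blocks in the off-diagonal strips adjacent to $A_k$ yields rank $2N'+r_k$, and the (possibly singular) block $\mathrm{Diag}(A_1,\dots,A_{k-1})$ does not interfere. Your case $N'\ge n_k-r_k$, however, has a genuine gap. You reduce only $A_k$ and speak of filling the top-right and bottom-left blocks to supply \emph{its} missing pivots, but the remaining diagonal blocks $A_1,\dots,A_{k-1}$ may also be singular, and your recipe does nothing about that. For instance, take $k=2$, $A_1=0\in M(2\times 2,K)$, $A_2=I_2$: then $n_k-r_k=0$, there are no missing pivots in $A_2$, and your construction degenerates to the zero off-diagonal completion of rank $2$, whereas rank $4$ is required (and easily attained). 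A correct argument needs either an induction on $k$, as the paper carries out in the symmetric and antisymmetric settings of Theorems~\ref{matsim} and~\ref{matantisim}, or a simultaneous normalization of all the $A_i$ followed by a global placement of off-diagonal pivots. Your proposed fallback of invoking the general maximum-rank formula from \cite{CJRW} is exactly what the paper itself does.
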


Here we determine the maximum rank of the symmetric completions of a
symmetric partial matrix where only the diagonal blocks are given 
(see Theorem \ref{matsim}) and
the minimum rank and the maximum rank of the antisymmetric completions of an
antisymmetric partial matrix where only the diagonal blocks are given
(see Theorem \ref{matantisim}). 
In \cite{Cain}, \cite{Ca-deSa}, \cite{Tian}, the analogous problem 
 has been solved for hermitian matrices.

\section{Notation}

\begin{nota}
$\bullet$ We say that a diagonal matrix $A \in M(n \times n, K)$ 
is {\em b-diagonal} if all the nonzero elements 
of the diagonal are at the beginning, that is either it is the zero  matrix or
 there 
exists $r \in \{1,...,n\}$   such that $a_{i,i} \neq 0$ if and only if 
 $i \in \{1,..., r\}$.

$\bullet $ We say that {\em  a sequence of elementary operations} on the rows and on the columns of a square matrix is {\em symmetric} if it is given by an elementary operation on the rows, the same elementary operation on the columns, another elementary operation on the rows, the same elementary operation on the columns and so on.

$\bullet $ 
For any $r, m,n \in \N-\{0\}$ with $ r \leq min\{m, n\}$, 
we define $T^r_{m,n}$ to be the matrix $ m \times n$
with entries in $K$ such that 
$$(T^r_{m,n})_{i,j} = \left\{ \begin{array}{ll}
1 & \mbox{\it if }  (i,j) = (m,n),  (m-1,n-1),...., (m-r+1, n-r+1), \\
0 & \mbox{\it otherwise,} 
\end{array}
\right.$$
for any $ i \in \{1,..., m\}$ and $j \in \{1,..., n\}$.

$\bullet$ 
For any $r, m, n \in \N-\{0\}$ with $ r \leq min\{m, n\}$, 
we define $E^r_{m,n}$ to be the matrix $m \times n$ with entries in 
$K$ such that 
$$(E^r_{m,n})_{i,j} = \left\{ \begin{array}{ll}
1 & \mbox{\it if }  (i,j) = (1,1),...., (r, r), \\
0 & \mbox{\it otherwise,} 
\end{array}
\right.$$
for any $ i \in \{1,..., m\}$ and $j \in \{1,..., n\}$.

$\bullet$ 
For any $r, m, n \in \N-\{0\}$ with $ r \leq min\{m, n\}$ and $r$ even, 
we define $R^r_{m,n}$ to be the matrix $m \times n$ with entries in 
$K$ such that 
$$(R^r_{m,n})_{i,j} = \left\{ \begin{array}{ll}
1 & \mbox{\it if }  (i,j) = (1,2), (3,4),...., (r-1, r), \\
-1 & \mbox{\it if }  (i,j) = (2,1), (4,3),...., (r, r-1), \\
0 & \mbox{\it otherwise,} 
\end{array}
\right.$$
for any $ i \in \{1,..., m\}$ and $j \in \{1,..., n\}$.

We define $T^0_{m,n}$, $E^0_{m,n}$  and $R^0_{m,n}$ 
to be the zero matrix $ m \times n$.

We write $E^r_{n}$ instead of $E^r_{n,n}$ and $R^r_{n}$ instead of
$R^r_{n,n}$ for simplicity. 
We omit the  subscript in  $T^r_{m,n}$, $E^r_{m,n}$  and $R^r_{m,n}$ 
when their size  is clear from the context.

\end{nota}

{\bf Examples.} $$T^{3}_{4,5} = 
\left(\begin{array}{ccccc}
0 & 0 & 0 & 0 & 0 \\
0 & 0 & 1 & 0 & 0 \\
0 & 0 & 0 & 1 & 0 \\
0 & 0 & 0 & 0 & 1 
 \end{array} \right), \;\;\;  E^{2}_{4} = E^{2}_{4,4} = 
\left(\begin{array}{cccc}
1 & 0 & 0 & 0  \\
0 & 1 & 0 & 0  \\
0 & 0 & 0 & 0  \\
0 & 0 & 0 & 0  
 \end{array} \right), \;\;\;  R^{4}_{5,6} = 
\left(\begin{array}{cccccc}
0  & 1 &  0 & 0 & 0 & 0\\
-1 & 0 &  0 & 0 & 0 & 0\\
0  & 0 &  0 & 1 & 0 & 0\\
0  & 0 & -1 & 0 & 0 & 0 \\
0  & 0 &  0 & 0 & 0 & 0 
 \end{array} \right).$$

\begin{nota}
Let $n_1,...., n_k$ be nonzero natural numbers.  
Let $A_i \in M(n_i \times n_i, K) $ for $i=1,..., k$.
 
We denote  by
   $Diag (A_1,...., A_k)$ the block diagonal matrix whose diagonal blocks
are $A_1,..., A_k$ (thus the entries out of the diagonal blocks are $0$). 

We denote by   $diag (A_1,...., A_k)$  the partial matrix where only the 
diagonal blocks are given and whose diagonal blocks are $A_1,..., A_k$.
We call such a matrix a block diagonal  partial matrix.
\end{nota}

\section{Completions of  symmetric  block diagonal partial matrices}

\begin{rem} \label{oss} Let
 $K$ be a field of characteristic different from $2$.
Let $n_1,...., n_k$ be nonzero natural numbers.  
Let $A_i \in M(n_i \times n_i, K) $ be symmetric
 for $i=1,..., k$ and $r_i = rank (A_i)$.
It is well known that, 
by a symmetric sequence of elementary operations $h_i$, 
 we can change the matrix $A_i$ into a b-diagonal matrix $D_i$.

Then there exists a completion  of rank $l$ of the partial matrix 
$diag (A_1,...., A_k)$
if and only if there exists a completion  of rank $l$ of the partial matrix 
$diag (D_1,...., D_k)$.
\end{rem}

\begin{proof} Let 
\begin{center}
$I_1=\{1,..., n_1\}, \; \; I_2 =\{ n_1 + 1,...., n_1+ n_2\},\; \;....,\;\; 
I_k =\left\{\left(\sum_{i=1,..., k-1} n_i \right)+ 1,...,\sum_{i=1,..., k}
 n_i \right\}.$
\end{center}
Suppose there exists a symmetric completion of the partial matrix 
$diag (A_1,...., A_k)$ of rank $l$.  
We apply the symmetric sequence of  elementary operations $h_1$ to the rows 
and the columns with index in $I_1$,
then we apply the symmetric sequence of  elementary operations $h_2$
 to the rows  and  the columns with index in $I_2$ 
and   so on. In this way we get a symmetric completion of 
$diag (D_1,...., D_k)$ of rank $l$.

Conversely, suppose there exists a symmetric completion of 
$diag (D_1,...., D_k)$ of rank $l$. 
Apply the symmetric sequence of elementary operations $h_1^{-1}$ to the 
rows and to the columns with index in $I_1$,
then we apply the symmetric sequence of
 elementary operations $h_2^{-1}$ to the rows 
 and  the columns with index in $I_2$ 
and   so on. In this way we get a symmetric completion of 
$diag (A_1,...., A_k)$ of rank $l$. 
\end{proof}

\begin{thm} \label{matsim} Let
 $K$ be a field of characteristic different from $2$ and
let $n_1,...., n_k$ be nonzero natural numbers with
$n_1 \leq n_2 \leq .... \leq n_k$.  
Let $A_i \in M(n_i \times n_i, K) $ for $i=1,..., k$ be symmetric matrices
 and let  $r_i = rank (A_i)$.
Let $A $ be the partial matrix $diag (A_1,...., A_k)$. 
Then the maximum of $\{rk(\tilde{A}) | \; \tilde{A} \; \mbox{\it symmetric 
completion of } A \}$ is 
\begin{equation} \label{formula} 
min \left\{ \sum_{i=1,...,k} n_i , \; 2 \left( 
\sum_{i=1,...,k-1} n_i \right) +r_k
\right\}.\end{equation}
\end{thm}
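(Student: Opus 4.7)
The upper bound follows immediately from Theorem \ref{matqualsiasi}(ii), since any symmetric completion of $A$ is in particular a completion. For the reverse inequality, I must exhibit a symmetric completion of rank exactly $m^* := \min\{\sum_i n_i,\; 2\sum_{i<k} n_i + r_k\}$. By Remark \ref{oss}, I may replace each $A_i$ by a b-diagonal matrix with the same rank, so I assume $A_i = Diag(a_{i,1},\ldots,a_{i,r_i},0,\ldots,0)$ with $a_{i,\ell}\neq 0$. Only the off-diagonal blocks $X_{i,j}$ for $i<j$ need to be specified, the rest being determined by $X_{j,i}=X_{i,j}^T$. Write $\sigma_i := n_1+\cdots+n_{i-1}$ and $N := \sigma_{k+1}$.

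My plan is to argue by induction on $k$. The case $k=1$ is immediate. For the inductive step from $k-1$ to $k$ (with $k\geq 3$), I would merge the last two blocks: first use the $k=2$ case (see below) to obtain a symmetric completion $\tilde A$ of $diag(A_{k-1},A_k)$ of maximum rank $\min\{n_{k-1}+n_k,\,2n_{k-1}+r_k\}$; then apply the inductive hypothesis to $A_1,\ldots,A_{k-2},\tilde A$ (whose sizes $n_1\leq\cdots\leq n_{k-2}\leq n_{k-1}+n_k$ remain nondecreasing) to obtain a symmetric completion of rank $\min\{N,\,2\sigma_{k-1}+rk(\tilde A)\}$. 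A short arithmetic verification (substituting the two possible values of $rk(\tilde A)$ into $\min\{N,\,2\sigma_{k-1}+n_{k-1}+n_k\}$ and $\min\{N,\,2\sigma_{k-1}+2n_{k-1}+r_k\}$) shows this equals $m^*$, closing the induction.

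This reduces everything to the base case $k=2$, which I split into two sub-cases. When $n_2-r_2>n_1$ (so $m^*=2n_1+r_2$), I take $X_{12}$ with a $1$ at each position $(\ell,\,r_2+\ell)$ for $\ell=1,\ldots,n_1$ and zeros elsewhere (the column indices are valid since $r_2+n_1<n_2$). A direct row-reduction then produces exactly $m^*$ independent rows: the $r_2$ rows of index $(2,l)$ with $l\leq r_2$ contribute pivots from the $A_2$-diagonal, and for each $\ell$ the rows indexed by $(1,\ell)$ and $(2,r_2+\ell)$ form a linearly independent pair (with pivots at the swapped positions $(2,r_2+\ell)$ and $(1,\ell)$), providing $2n_1$ more independent rows.

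The sub-case $n_2-r_2\leq n_1$ (so $m^*=n_1+n_2$) requires a \emph{nonsingular} symmetric completion and is the main obstacle. A naive choice of $X_{12}$---such as an identity or anti-identity block---can fail when the nonzero diagonal entries of $A_1$ and $A_2$ satisfy certain multiplicative relations (for instance $a_{1,\ell}\,a_{2,\ell'}=1$), forcing the determinant to vanish. The construction therefore has to be adapted to the specific entries of $A_1,A_2$, typically by rescaling some off-diagonal entries or perturbing a few of them; that such an adjustment is always available uses the hypothesis that $K$ has characteristic different from $2$. When $A_1$ is invertible, checking nonsingularity of $M = \bigl(\begin{smallmatrix}A_1 & X_{12}\\ X_{12}^T & A_2\end{smallmatrix}\bigr)$ reduces via a Schur complement to the invertibility of $A_2 - X_{12}^T A_1^{-1} X_{12}$, which can be analysed block-by-block using the b-diagonal decomposition; the general case requires a similar but more intricate case analysis based on the four blocks of sizes $r_1,\,n_1-r_1,\,r_2,\,n_2-r_2$, and it is this explicit construction that will occupy the bulk of the proof.
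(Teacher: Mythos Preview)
Your inductive scheme (merge the last two blocks via the $k=2$ case, then apply the hypothesis to $A_1,\dots,A_{k-2},\tilde A$) is sound and the arithmetic check you sketch does go through; the paper organizes the induction differently (it peels off $A_1$ and works with the last $k-1$ blocks, with a separate case $n_k-r_k\ge\sum_{i<k}n_i$), but your grouping is equally valid and arguably tidier.

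The genuine gap is in the base case $k=2$ with $n_2-r_2\le n_1$, which you leave open and diagnose as requiring a construction adapted to the particular field elements $a_{i,\ell}$. It does not. Take $t=\max\{n_1-r_1,\,n_2-r_2\}\le n_1$ and set $X_{12}=T^t_{n_1,n_2}$, i.e.\ put $1$'s on the positions $(n_1,n_2),(n_1-1,n_2-1),\dots,(n_1-t+1,n_2-t+1)$. The point of placing the $1$'s at the \emph{end} (rather than at the beginning, as in an identity block) is that they link only to the zero tails of the b-diagonal blocks: if $t=n_2-r_2$ then every column of $X_{12}$ that carries a $1$ lies beyond column $r_2$ of $A_2$, and if $t=n_1-r_1$ then every such row lies beyond row $r_1$ of $A_1$. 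After swapping the last $t$ rows of the upper block with the last $t$ rows of the lower block, the resulting matrix is block lower-triangular in the first case and block upper-triangular in the second, with invertible diagonal blocks in either case; hence the completion is nonsingular regardless of the actual values $a_{i,\ell}$. So the phenomenon you worried about (e.g.\ $a_{1,\ell}a_{2,\ell'}=1$ killing the determinant) is an artifact of letting the off-diagonal $1$'s collide with the nonzero parts of $A_1,A_2$; the $T^t$ placement avoids this entirely, and no Schur-complement case analysis is needed.
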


\begin{proof} 
By Theorem \ref{matqualsiasi}, any completion of $A$ has rank 
less or equal than the number in (\ref{formula})
Note that this can be proved  directly in an easy way:
clearly any completion of $A$ has rank  less or equal than 
$\sum_{i=1,...,k} n_i $; besides  any completion of $A$ has rank
 less or equal than $ 2\left( \sum_{i=1,...,k-1} n_i \right) +r_k $, 
since the submatrix of the completion 
given by the first $ \sum_{i=1,...,k-1} n_i $ rows has rank less or equal than 
$ \sum_{i=1,...,k-1} n_i $ and the submatrix given by the last $n_k$ rows 
has rank less or equal than $ \sum_{i=1,...,k-1} n_i + r_k$ (in fact its
submatrix given by the first $ \sum_{i=1,...,k-1} n_i $ columns has rank 
less or equal than $ \sum_{i=1,...,k-1} n_i $ and the remaining part, that 
is $A_k$, has rank $r_k$).

\smallskip

Now we prove, by induction on $k$,
 that we can complete $A$ to a symmetric matrix whose rank is 
the number in (\ref{formula}). 
 By Remark \ref{oss}, we can suppose that 
$A_i$ is b-diagonal for $i=1,..., k$.

\underline{Case $k=2$.} 

Let $t= max\{n_1- r_1 , n_2 -r_2\}$.
Observe that $ t \leq n_1$ if and only if $ n_2 -r_2 \leq n_1$.

$\bullet $ If $t \leq n_1$, we consider the following symmetric 
completion of $A$:
$$ \left( \begin{array}{cc} A_1 & T^t_{n_1, n_2} 
\\  T^t_{n_2, n_1}  & A_2 \end{array}\right).$$  
By swapping the last $t$ rows of the upper blocks of the matrix 
with the last $t$ rows of the lower blocks of the matrix, we can see that 
the rank is $n_1 + n_2$.

$\bullet $
If $t > n_1$, then $t= n_2 -r_2$ and $n_2-r_2 > n_1$. In this case 
we consider the following symmetric completion of $A$:
$$ \left( \begin{array}{cc} A_1 & T^{n_1}_{n_1, n_2} 
\\  T^{n_1}_{n_2, n_1}  & A_2 \end{array}\right).$$  
By swapping the first $n_1$ rows of 
with the last $n_1$ rows, we can see that 
the rank is $n_1 + r_2 + n_1$.

\smallskip

So, in the case $k=2$ we have constructed a symmetric completion of rank
equal to $$\left\{ 
\begin{array}{ll} 
n_1 + n_2   & \mbox{\it if } \; n_2-r_2 \leq  n_1, \\ 
2 n_1 + r_2 & \mbox{\it if } \; n_2 -r_2 >  n_1, 
\end{array}
\right. $$
that is  $ min \{n_1 + n_2 , 2 n_1 + r_2\}$.

\underline{Induction step.} Let $k \geq 3$.

$\bullet $ If $ n_k-r_k \geq \sum_{i=1,..., k-1} n_i$, we can consider 
 the following symmetric completion: 
$$ \left( \begin{array}{ccccc|ccccccccc} 
A_1 & 0   & \cdot & \cdot  & 0        &   & & & &  & &         \\
0             & \ddots & \cdot  & \cdot  & \cdot   &   & & & &  & &    & &     \\
\cdot        & \cdot  & \ddots  & \cdot   & \cdot   & & & & &T^s_{s, n_k} &  & & &   \\
\cdot        & \cdot & \cdot  & \ddots   & 0       & &         & & & & &  & &    \\
0             & \cdot & \cdot  & 0    & A_{k-1} & &    & & & & &  & &         \\
\hline
 & & & & & & & & & & & & &    \\
 & & & & & & & & & & & & &    \\
 & & & & & & & & & & & & &    \\
 & & & & & & & & & & &  & &   \\
 & & T^{s}_{n_k, s} & &  & & & & & A_k & & &  \\
 & & & & & & & & &  & & & &   \\
 & & & & & & & & & & & & &    \\
 & & & & & & & & & & &  & &   \\
 & & & & & & & & &  & &   & &  
\end{array}\right),$$  
where $s= \sum_{i=1,..., k-1} n_i$. By swapping the first $s$ rows and the 
last $s$ rows, we can see
that its rank is $2s+ r_k=  2 (\sum_{i=1,..., k-1} n_i )
+r_k$.

$\bullet $ Suppose $ n_k-r_k < \sum_{i=1,..., k-1} n_i$. 
Let $P$ the submatrix of $A$ given by the last 
$\sum_{i=2,..., k} n_i$ rows and the last 
$\sum_{i=2,..., k} n_i$ columns.
By induction assumption, we can complete  $P$ 
to a symmetric matrix with rank 
$$ min \left\{ \sum_{i=2,...,k} n_i \; ,\; 2 \left( \sum_{i=2,...,k-1} n_i \right)
 +r_k\right\}.$$

We consider two cases: the case where we complete $P$ to a symmetric 
 matrix of rank $\sum_{i=2,...,k} n_i $ and the case where
 we complete $P$ to a symmetric matrix of rank $2 \left( \sum_{i=2,...,k-1} n_i
\right)+ r_k $. We  state that, in both cases,
 we can complete $A$ to a symmetric  matrix of rank
$\sum_{i=1,...,k} n_i $.

\smallskip

- Case  we complete $P$ to a symmetric  matrix of rank $\sum_{i=2,...,k} n_i $.

By a symmetric sequence of
 elementary operation $h$, 
we can change the completion of $P$ into a diagonal matrix $D$ where all the 
elements of the diagonal are nonzero; then we can complete 
$$ diag (A_1, D)$$ 
to the  symmetric matrix 
$$ \left( \begin{array}{cc} 
A_1   &  T^{n_1-r_1}_{n_1, \sum_{i=2,...,k} n_i }       \\
T^{ n_1-r_1}_{\sum_{i=2,...,k} n_i , n_1}     &  D
\end{array}
\right),$$  
which has rank $ \sum_{i=1,...,k} n_i$. 
Then, by applying the symmetric sequence
of elementary operations  $h^{-1}$,  
we get a symmetric completion of $A$ of rank $ \sum_{i=1,...,k} n_i$ .

- Case we complete $P$ to a symmetric 
 matrix of rank $2 \left( \sum_{i=2,...,k-1} n_i \right)+ r_k $.

By a symmetric sequence of
 elementary operation $h$, we can change the completion of $P$
into a diagonal matrix $L$ such that $ L_{i,i} \neq 0 $ if and only if 
$ i \leq 2 \left( \sum_{i=2,...,k-1} n_i \right) +r_k $;
define $$t= max \left\{n_1 -r_1 , 
\sum_{i=2,...,k} n_i - rk(L) \right\}= max \left\{n_1 -r_1 , 
\sum_{i=2,...,k} n_i - 2 \left( \sum_{i=2,...,k-1} n_i\right) -r_k \right\};$$ 
observe that 
$t \leq n_1$; then  we can complete $diag(A_1, L) $ 
to the  symmetric matrix
$$ \left( \begin{array}{cc} 
A_1   &  T^{t}_{n_1, \sum_{i=2,...,k} n_i }                \\
T^{t}_{\sum_{i=2,...,k} n_i, n_1 }     &  L
\end{array}
\right),$$  
which has rank $ \sum_{i=1,...,k} n_i$. Then, by applying the symmmetric 
sequence of elementary operations $h^{-1}$, 
we get a symmetric  completion of $A$ of rank $ \sum_{i=1,...,k} n_i$.
\end{proof}

\section{Completions of  antisymmetric block diagonal matrices}

\begin{rem}
(a) If $K$ is a  field of characteristic $2$, the set of the antisymmetric 
matrices $n \times n$  over $K$ is equal to the set of the matrices 
$n \times n$ over $K$.

(b)  If $K$ is a field of characteristic different from $2$,
we can change, by a symmetric sequence 
of elementary operations, an antisymmetric  matrix 
into a diagonal block matrix  whose diagonal blocks are all equal to 
$$ \left(\begin{array}{cc}
0 & 1 \\
-1 & 0
\end{array}\right).$$ 
In fact, if in the $i$-th row (and thus in the $i$-th column) there is 
a nonzero element,  by applying a symmetric sequence 
of elementary operations, we can suppose that the entry $(i,i+1) $ 
(and thus the entry $(i+1,i)$) is nonzero and all the other entries of 
the $i$-th row and the $i$-th column are zero.
By another symmetric sequence  of elementary operations,
we can get easily the  form described above.
\end{rem}

\begin{nota}
For every $ n \in \N$, we denote by $  \langle n \rangle $ 
the ``even part'' of $n$, that is 
$$ \langle n \rangle = 
\left\{ \begin{array}{ll}
n  & \mbox{\it if } n \;\mbox{\it  is even,} \\
n-1  & \mbox{\it if } n \; \mbox{\it  is odd.} 
\end{array} \right.$$ 

\end{nota}

\begin{rem} \label{oss2} Let $K$ be a field of characteristic different
from $2$. 
Let $n_1,...., n_k$ be nonzero natural numbers.  
Let $A_i \in M(n_i \times n_i, K) $ be antisymmetric matrices 
for $i=1,..., k$ and let $r_i = rank (A_i)$.
Then there exists an antisymmetric 
completion of rank $l$ of the partial matrix $diag (A_1,...., A_k)$
 if and only if there exists an antisymmetric 
 completion of rank $l$ of the partial matrix 
$diag (R^{r_1}_{n_1},...., R^{r_k}_{n_k})$.
\end{rem}

(It can be proved as  Remark \ref{oss}.)

\begin{thm} \label{matantisim} 
Let $K$ be a field of characteristic different from $2$. 
Let $n_1,...., n_k$ be nonzero  natural numbers with
$n_1 \leq n_2 \leq .... \leq n_k$.  
Let $A_i \in M(n_i \times n_i, K) $ be antisymmetric 
matrices  for $i=1,..., k$
and let $r_i = rank (A_i)$ (obviously the $r_i$ are even numbers).
Let $A $ be the partial matrix $diag (A_1,...., A_k)$. 
Then we have:

(i) the minimum of $\{rk(\tilde{A}) | \; \tilde{A} \; \mbox{\it antisymmetric
completion of } A \}$ is $$ max\{r_i| \; i=1,...,k\}$$ 

(ii) the maximum of $\{rk(\tilde{A}) | \; \tilde{A} \; \mbox{\it antisymmetric
completion of } A \}$ is $$ min \left\{ \langle \sum_{i=1,...,k} n_i \rangle
,\;  2 \left(\sum_{i=1,...,k-1} n_i \right) +r_k
\right\}.$$ 
\end{thm}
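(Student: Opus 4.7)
The plan is to adapt the proof of Theorem \ref{matsim} to the antisymmetric setting, after reducing via Remark \ref{oss2} to the case $A_i=R^{r_i}_{n_i}$, whose nonzero entries are confined to the top-left $r_i\times r_i$ sub-block. The upper bounds follow almost immediately. In (i), each $A_i$ is a principal submatrix of any completion $\tilde A$, so $\mathrm{rank}(\tilde A)\geq r_i$ for all $i$, hence $\mathrm{rank}(\tilde A)\geq\max_i r_i$. In (ii), Theorem \ref{matqualsiasi} already gives $\mathrm{rank}(\tilde A)\leq 2\sum_{i<k}n_i+r_k$; since $\tilde A$ is antisymmetric its rank is even, so $\mathrm{rank}(\tilde A)\leq\langle\sum_i n_i\rangle$ as well (the other bound is automatically even because each $r_i$ is even).

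To achieve the minimum in (i), set $r=\max_i r_i$ and construct the completion as a sum of $r/2$ rank-two antisymmetric matrices $M=\sum_{j=1}^{r/2}(u_j v_j^{T}-v_j u_j^{T})$, where $u_j,v_j\in K^{N}$ (with $N=\sum_i n_i$) are defined blockwise: within block $i$, the restriction of $u_j$ has a $1$ at local position $2j-1$ if $j\leq r_i/2$ and is zero otherwise, and similarly $v_j$ has a $1$ at local position $2j$. A direct check confirms that the $i$-th diagonal block of $M$ is exactly $R^{r_i}_{n_i}$; the rank of $M$ is then exactly $r$, being at most $r$ as a sum of $r/2$ rank-two matrices and at least $r$ because the diagonal block realizing the maximum rank appears as a principal submatrix.

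For the maximum in (ii), argue by induction on $k$, paralleling Theorem \ref{matsim}. In the base case $k=2$, separate the subcases $n_2-r_2\leq n_1$ and $n_2-r_2>n_1$ and choose an off-diagonal block $B$ in $\begin{pmatrix} R^{r_1}_{n_1} & B \\ -B^{T} & R^{r_2}_{n_2}\end{pmatrix}$ whose nonzero pattern mimics the $T^{t}_{n_1,n_2}$ used in the symmetric proof, with $-B^{T}$ in the lower-left to preserve antisymmetry. In the inductive step, if $n_k-r_k\geq\sum_{i<k}n_i$, place an analogous corner pattern in the last off-diagonal strip to attain the target $2\sum_{i<k}n_i+r_k$; otherwise, apply the induction hypothesis to $\mathrm{diag}(A_2,\ldots,A_k)$, bring the resulting completion into canonical form by a symmetric sequence of elementary operations, extend across the first block as in Theorem \ref{matsim}, and undo the change of basis.

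The main obstacle is the parity of $N=\sum_i n_i$: an antisymmetric matrix has even rank, so when $N$ is odd the target $\langle N\rangle = N-1$ forces a completion that is exactly one short of full rank. This is handled by choosing the off-diagonal pattern so that some $(N-1)\times(N-1)$ antisymmetric principal submatrix has non-vanishing Pfaffian, while keeping the completion a valid extension of the given diagonal blocks. Apart from this parity bookkeeping, every step mirrors the symmetric case.
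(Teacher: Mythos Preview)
Your proposal is correct and essentially identical to the paper's proof: the rank-two sum $\sum_j(u_jv_j^{T}-v_ju_j^{T})$ you build in (i) is exactly the block matrix with off-diagonal $(p,q)$-block $R^{\min(r_p,r_q)}$ that the paper writes out explicitly after reordering by rank, and your induction scheme for (ii), using $T^{t}$-type off-diagonal blocks with the same case split on whether $n_k-r_k\ge\sum_{i<k}n_i$, mirrors the paper's construction step by step. The ``parity bookkeeping'' you mention is precisely the small case analysis the paper carries out when verifying the rank in the $k=2$ and inductive-step completions.
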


\begin{proof}

(i) Let $A'_1,...., A'_k$ be the matrices $A_1,....,A_k$ ordered according to 
the rank, i.e.  let $A'_1,...., A'_k$ be such that  
$\{A'_1,...., A'_k\}= \{A_1,....,A_k\}$ and 
$s_1 \leq .... \leq s_k$,  where $ s_i =  rank (A'_i)$.
Obviously we can complete $diag (A'_1,...., A'_k)$ 
to an antisymmetric matrix of rank $l$ if and only if  we can complete 
$diag (A_1,...., A_k)$ to an antisymmetric matrix of rank $l$. 
Let $A' = diag (A'_1,..., A'_k)$.
By Remark \ref{oss2}, we can suppose that 
$A'_i= R^{s_i}_{m_i}$ for $i=1,..., k$, where $m_i$ is the number 
of the rows and of the columns of $ A'_i$. 

Observe that, since $s_i \leq s_{i+1}$, then $ s_i \leq m_{i+1}$. Thus  
we can complete the matrix $A'$ to the following antisymmetric matrix:
$$ \left( \begin{array}{ccccccc}
R^{s_1}_{m_1} & R^{s_1}  &  R^{s_1}  & \cdot & \cdot & \cdot & R^{s_1}  \\ 
R^{s_1}  & R^{s_2}_{m_2} &  R^{s_2}  & \cdot & \cdot & \cdot & R^{s_2}  \\
 R^{s_1} & R^{s_2}  &  R^{s_3}_{m_3} & \cdot & \cdot &  \cdot & R^{s_3} \\
\cdot & \cdot &  \cdot & \cdot & \cdot & \cdot & \cdot \\
\cdot & \cdot &  \cdot & \cdot & \cdot & \cdot & \cdot \\
\cdot & \cdot &  \cdot & \cdot & \cdot & \cdot & \cdot \\
R^{s_1} & R^{s_2} &  R^{s_3} & \cdot & \cdot & \cdot & R^{s_k}_{m_k}   
\end{array}\right),$$
where we omitted the sizes of the off-diagonal matrices  for simplicity 
(the sizes are obliged).
The rank of this completion is clearly $ max\{s_i| \; i=1,...,k\}$, which is 
equal to $ max\{r_i| \; i=1,...,k\}$.

Finally, observe that, obviously, the rank of any completion of 
$A$ is greater or equal than $ max\{r_i| \; i=1,...,k\}$.

(ii) The same argument as in Theorem \ref{matsim} proves that
any antisymmetric completion of $A$ has rank less or equal than 
$$ min \left\{ \langle \sum_{i=1,...,k} n_i \rangle , \;
 \; 2 \left( \sum_{i=1,...,k-1} n_i \right) +r_k \right\}.$$

\smallskip

Now we prove, by induction on $k$,  that we can complete $A$ to an 
antisymmetric matrix
whose rank is the number above.  By Remark \ref{oss2}, we can suppose that 
$A_i= R^{r_i}_{n_i}$ for $i=1,..., k$.

\underline{Case $k=2$.} 

Let $t= max\{n_1- r_1 , n_2 -r_2\}$.
Observe that $ t \leq n_1$ if and only if $ n_2 -r_2 \leq n_1$.

$\bullet $ If $t \leq n_1$, we consider the following 
antisymmetric completion of $A$:
$$ \left( \begin{array}{cc} R^{r_1}_{n_1} & -T^t_{n_1, n_2} 
\\  T^t_{n_2, n_1}  & R^{r_2}_{n_2} \end{array}\right).$$  
By swapping the last $t$ rows of the upper blocks of the matrix 
with the last $t$ rows of the lower blocks of the matrix, we can see that 
the rank is $\langle n_1 + n_2 \rangle $
(consider the two cases: 

1) both $n_1$ and $n_2$ are  odd or both of them are even  

2) one of $n_1$ and $n_2$ is odd and the other is even).

$\bullet $
If $t > n_1$, then $t= n_2 -r_2$ and $n_2-r_2 > n_1$. In this case 
we consider the following antisymmetric completion of $A$:
$$ \left( \begin{array}{cc} R^{r_1}_{n_1} & -T^{n_1}_{n_1, n_2} 
\\  T^{n_1}_{n_2, n_1}  & R^{r_2}_{n_2} \end{array}\right).$$  
By swapping the first $n_1$ rows of 
with the last $n_1$ rows, we can see that 
the rank is $n_1 + r_2 + n_1$.

\smallskip

So, in the case $k=2$ we have constructed a completion of rank
equal to $$\left\{ 
\begin{array}{ll} 
\langle n_1 + n_2  \rangle  & \mbox{\it if } \; n_2-r_2 \leq  n_1, \\ 
2 n_1 + r_2 & \mbox{\it if } \; n_2 -r_2 >  n_1, 
\end{array}
\right. $$
that is  $ min \{\langle n_1 + n_2 \rangle , 2 n_1 + r_2\}$.

\underline{Induction step.} Let $k \geq 3$.

$\bullet $ If $ n_k-r_k \geq \sum_{i=1,..., k-1} n_i$, we can consider 
 the following antisymmetric completion: 
$$ \left( \begin{array}{ccccc|ccccccccc} 
R^{r_1}_{n_1} & 0   & \cdot & \cdot  & 0        &   & & & &  & &         \\
0             & \ddots & \cdot  & \cdot  & \cdot   &   & & & &  & &    & &     \\
\cdot        & \cdot  & \ddots  & \cdot   & \cdot   & & & & &-T^s_{s, n_k} &  & & &   \\
\cdot        & \cdot & \cdot  & \ddots   & 0       & &         & & & & &  & &    \\
0             & \cdot & \cdot  & 0    & R^{r_{k-1}}_{n_{k-1}} & &    & & & & &  & &         \\
\hline
 & & & & & & & & & & & & &    \\
 & & & & & & & & & & & & &    \\
 & & & & & & & & & & & & &    \\
 & & & & & & & & & & &  & &   \\
 & & T^{s}_{n_k, s} & &  & & & & & R^{r_k}_{n_k} & & &  \\
 & & & & & & & & &  & & & &   \\
 & & & & & & & & & & & & &    \\
 & & & & & & & & & & &  & &   \\
 & & & & & & & & &  & &   & &  
\end{array}\right),$$  
where $s= \sum_{i=1,..., k-1} n_i$. By swapping the first $s$ rows with the
last $s$ rows, we can see that 
its rank is $2s+r_k= 2 \left( \sum_{i=1,..., k-1} n_i \right) +r_k$.

$\bullet $ Suppose $ n_k-r_k < \sum_{i=1,..., k-1} n_i$. 
By induction assumption, we can complete the submatrix $P$ of $A$
 given by the last 
$\sum_{i=2,..., k} n_i$ rows and the last 
$\sum_{i=2,..., k} n_i$  columns to an antisymmetric matrix with rank 
$$ min \left\{ \langle \sum_{i=2,...,k} n_i \rangle \; , \; 
 2 \left( \sum_{i=2,...,k-1} n_i \right) +r_k
\right\}.$$

We consider two cases: the case where we complete $P$ to a matrix of 
rank $ \langle \sum_{i=2,...,k} n_i \rangle $ and the case
we complete $P$ to a matrix of rank $2 
\left( \sum_{i=2,...,k-1} n_i \right) + r_k $.
We will show that, in both cases,
 we can complete $A$ to an antisymmetric matrix of rank
$ \langle \sum_{i=1,...,k} n_i \rangle $. 

\smallskip

- Case we complete $P$ to a matrix of rank $ \langle
\sum_{i=2,...,k} n_i \rangle $.

By a symmetric sequence of elementary operations $h$, 
 we can change the completion of $P$ into the  matrix
$ R^{\langle \sum_{i=2,...,k} n_i \rangle }_{\sum_{i=2,...,k} n_i}$. Then
 we can complete 
$$ diag \left(R^{r_1}_{n_1},   
R^{\langle \sum_{i=2,...,k} n_i \rangle }_{\sum_{i=2,...,k} n_i} \right)$$ 
to the antisymmetric matrix 
$$ \left( \begin{array}{cc} 
R^{r_1}_{n_1}   &  -T^{n_1-r_1}               \\
T^{ n_1-r_1}    &  R^{\langle \sum_{i=2,...,k} n_i \rangle }_{\sum_{i=2,...,k} n_i}
\end{array}
\right),$$  
which has rank $ \langle \sum_{i=1,...,k} n_i \rangle$, in fact:

if $ \sum_{i=2,...,k} n_i $ is even, the rank is
$$ \left\{ \begin{array}{ll}
\sum_{i=1,..., k} n_i & \mbox{\it  if } n_1-r_1 \; \mbox{\it  is even, } \\ 
\sum_{i=1,..., k} n_i -1 & \mbox{\it  if } n_1-r_1 \; \mbox{\it  is odd, } 
 \end{array} \right.$$

 if $ \sum_{i=2,...,k} n_i $ is odd, the rank is 
$$ \left\{ \begin{array}{ll}
\sum_{i=1,..., k} n_i -1 & \mbox{\it  if } n_1-r_1 \; \mbox{\it  is even, } \\ 
\sum_{i=1,..., k} n_i  & \mbox{\it  if } n_1-r_1 \; \mbox{\it  is odd. } 
 \end{array} \right.$$

Then, by applying the symmetric sequence of elementary operations $h^{-1}$, 
we get an antisymmetric 
 completion of $A$ of rank $\langle \sum_{i=1,...,k} n_i \rangle$ .

- Case we complete $P$ to a matrix of rank $2 \left( \sum_{i=2,...,k-1} n_i
\right) + r_k $.

By a symmetric sequence of elementary operations $h$, 
we can change the completion of $P$ into 
$ R^{2(\sum_{i=2,...,k-1} n_i) +r_k }_{\sum_{i=2,...,k} n_i}$;
define $$t= max \left\{n_1 -r_1 , 
\sum_{i=2,...,k} n_i - 2 \left( \sum_{i=2,...,k-1} n_i\right) -r_k \right\};$$ observe that 
$t \leq n_1$; then  we can complete $diag \left(R^{r_1}_{n_1}, 
R^{2\sum_{i=2,...,k-1} n_i +r_k }_{\sum_{i=2,...,k} n_i}\right) $ 
to the antisymmetric matrix 
$$ \left( \begin{array}{cc} 
R^{r_1}_{n_1}   &  -T^{t}               \\
T^{t}    &  R^{2\sum_{i=2,...,k-1} n_i +r_k}_{\sum_{i=2,...,k} n_i}
\end{array}
\right),$$  
which has rank $ \langle \sum_{i=1,...,k} n_i \rangle$; then, by applying the 
symmetric sequence of 
elementary operations  $h^{-1}$,   we get a completion of $A$ of rank
$ \langle \sum_{i=1,...,k} n_i \rangle$.

\end{proof}

{\small }

{\footnotesize\em Dipartimento di Matematica e Informatica ``U. Dini'', 
viale Morgagni 67/A,
50134  Firenze, Italia }

{\footnotesize\em
E-mail address: rubei@math.unifi.it}

\end{document}